\def\dx{\hspace{2pt}{\rm d}x}
\def\l2{_{L_2(\Omega)}}
\def\nl2{_{[L_2(\Omega)]^n}}
\def\eps{\epsilon}
 \def\XXint#1#2#3{{\setbox0=\hbox{$#1{#2#3}{\int}$} 
\vcenter{\hbox{$#2#3$}}\kern-.5\wd0}}
\def\T{\mathcal{T}}
\def\M{{\mathcal{M}}}
\def\V{{\mathbb{V}}}
\definecolor{darkred}{rgb}{.7,0,0}
\newcommand\Red[1]{\textcolor{black}{#1}}
\definecolor{green}{rgb}{0,0.7,0}
\definecolor{myblue}{rgb}{0,0,0.7}
\newcommand\modif[1]{\textcolor{black}{#1}}
\newcounter{saveeqn}
\newtheorem{remark}[theorem]{Remark}
\numberwithin{equation}{section}
\title{Convergence and optimality of higher-order adaptive finite element methods for eigenvalue clusters
}
\author{
  Andrea Bonito\thanks{Department of Mathematics, Texas A\&M University, College Station TX, 77843; email: {\tt bonito@math.tamu.edu}. 
  Partially supported by NSF Grant DMS-1254618.
}
\and
Alan Demlow\thanks{Department of Mathematics, Texas A\&M University, College Station TX, 77843; email: {\tt demlow@math.tamu.edu}.
Partially supported by NSF Grant DMS-1518925.
}
}
\begin{document}

\maketitle

\renewcommand{\thefootnote}{\fnsymbol{footnote}}

\begin{abstract}   Proofs of convergence of adaptive finite element methods for approximating eigenvalues and eigenfunctions of linear elliptic problems have been given in a several recent papers.  A key step in establishing such results for multiple and clustered eigenvalues was provided by Dai et. al. in \cite{DHZ15}, who proved convergence and optimality of AFEM for eigenvalues of multiplicity greater than one.  There it was shown that a theoretical (non-computable) error estimator for which standard convergence proofs apply is equivalent to a standard computable estimator on sufficiently fine grids.  
 In \cite{Gal15}, Gallistl used a similar tool to prove that a standard adaptive FEM for controlling eigenvalue clusters for the Laplacian using continuous piecewise linear finite element spaces converges with optimal rate.  When considering either higher-order finite element spaces or non-constant diffusion coefficients, however, the arguments of \cite{DHZ15} and \cite{Gal15} do not yield equivalence of the practical and theoretical estimators for clustered eigenvalues.  In this note we provide this missing key step, thus showing that standard adaptive FEM for clustered eigenvalues employing elements of arbitrary polynomial degree converge with optimal rate.  We additionally establish that a key user-defined input parameter in the AFEM, the bulk marking parameter, may be chosen entirely independently of the properties of the target eigenvalue cluster.  All of these results assume a fineness condition on the initial mesh in order to ensure that the nonlinearity is sufficiently resolved.    
\end{abstract}

\begin{keywords}
eigenvalue problems, spectral computations, a posteriori error estimates, adaptivity, optimality \end{keywords}

\begin{AM} 65N12, 65N15, 65N25, 65N30
\end{AM}

\pagestyle{myheadings}
\thispagestyle{plain}

\section{Introduction}  \label{sec:intro}

There has been high interest in recent years in the development and analysis of adaptive finite element methods (AFEM) for approximating eigenvalues and eigenfunctions of elliptic operators.  We consider the following model eigenvalue problem:  Find $(u_j,\lambda_j) \in H_0^1(\Omega) \times \mathbb{R}$ such that $(u_j,u_j)=1$ and
\begin{align}
\label{eq1-1}
a(u_j,v)=\lambda_j (u_j,v), ~~v \in H_0^1(\Omega).
\end{align}
Here $\Omega \subset \mathbb{R}^d$, $d=2,3$, is a polyhedral domain, $a(u,v):=\int_\Omega \nabla u \cdot \nabla v \dx$ and $(u,v):=\int_\Omega uv \dx$.  There is then a sequence of eigenvalues $0<\lambda_1 <\lambda_2 \le \lambda_3 \le ....$ and corresponding $L_2$-orthonormal eigenfunctions $u_1, u_2,...$ satisfying \eqref{eq1-1}.     Given a nested sequence of adaptively generated simplicial meshes $\{\T_\ell \}_{\ell \ge 0}$ with associated finite element spaces $\{ \V_\ell\}_{\ell \ge 0}$ ($\V_\ell \subset H_0^1(\Omega)$), the corresponding discrete eigenvalue problem is:  Find $(u_{\ell,j}, \lambda_{\ell,j}) \in \V_\ell \times \mathbb{R}$ such that $(u_{\ell,j},u_{\ell,j})=1$ and
\begin{align}
\label{eq1-2}
a(u_{\ell,j}, v) = \lambda_{\ell,j} (u_{\ell,j}, v), ~ v \in \V_\ell.
\end{align}
We seek to approximate an eigenvalue cluster $\{\lambda_j\}_{j \in J}$ and associated invariant subspace $\mathbb W:={\rm span}\{u_j\}_{j \in J}$.  Our index set $J$ is given by $J:=\{n+1,...,n+N\}$ for some $n\geq 0$, $N \geq 1$.  
The corresponding discrete sets are $\{\lambda_{\ell, j}\}_{j \in J}$ and $\mathbb W_\ell = {\rm span} \{u_{\ell, j}\}_{j \in J}$.  

AFEM for eigenvalues are typically based on the standard loop
$$
\textsf{solve} \rightarrow \textsf{estimate} \rightarrow \textsf{mark} \rightarrow \textsf{refine}.
$$ 
To \textsf{estimate} the finite element error, AFEM employs local error indicators $\eta_\ell(T)^2 := \sum_{j \in J} \eta_\ell (T, u_{\ell,j}, \lambda_{\ell, j})^2$, where $\eta_\ell(T, u_{\ell,j}, \lambda_{\ell,j})$ is a standard residual error indicator for the residual $-\Delta u_{\ell,j} - \lambda_{\ell, j} u_{\ell, j}$.  Let $0<\theta \le 1$ be a given parameter.  $\eta$ is used in \textsf{mark} to select a smallest set $\M_\ell \subset \T_\ell$ satisfying the D\"orfler (bulk) \cite{dorfler} criterion
\begin{equation}\label{eq1-4}
\sum_{T \in \M_\ell} \eta_\ell(T)^2 \ge \theta \sum_{T \in \T_\ell} \eta_\ell(T)^2.
\end{equation}

Proofs of convergence and optimality of AFEM for approximating \eqref{eq1-1} have been given in several papers.  The first proof of optimality of AFEM for controlling simple eigenvalues and eigenfunctions was given in \cite{DXZ08}.  Other papers concerning convergence of AFEM for simple eigenvalues include \cite{GG09, CaGe11, GM11}.  The paper \cite{DHZ15} contains a proof of optimal convergence of standard AFEM for an eigenvalue with multiplicity greater than one, while \cite{Gal15} proves a similar result for clustered eigenvalues.  These papers mirror AFEM convergence theory for source problems (cf. \cite{CKNS08}) in that they first prove that the AFEM contracts at each step.  An optimal convergence rate dependent on membership of the eigenfunctions in suitable approximation classes is then obtained by standard methods.  All require that the maximum mesh diameter in the initial mesh be sufficiently small to suitably resolve the nonlinearity of the problem.  The behavior of AFEM for eigenvalues in the pre-asymptotic regime was studied in \cite{GMZ09}, where the authors proved plain convergence results (with no rates) starting from any initial mesh.  These results guarantee convergence of AFEM for general elliptic eigenproblems to some eigenpair, but not generically to the correct pair.  

The works of Dai et al. \cite{DHZ15} and Gallistl \cite{Gal15} are most relevant to ours. 
In \cite{DHZ15} the authors establish convergence of an AFEM for a multiple eigenvalue of a symmetric second-order linear elliptic operator for arbitrary-degree finite element spaces.  \modif{A similar result is stated for eigenvalue clusters, but not all steps of the proof are provided} \Red{and the asymptotic nature of and constants in the results arising from the  proof suggested in \cite{DHZ15} depend on spectral resolution {\it within the target cluster.}}  Approximation of eigenclusters of the Laplacian using piecewise linear elements is considered in \cite{Gal15}.  \Red{The framework of \cite{Gal15} is {\it cluster-robust}, that is, all constants and the asymptotic nature of the estimates depending only on {\it separation of the target cluster from the remainder of the spectrum}}.  This leaves open the question of \Red{cluster-robust} convergence results for AFEM for eigenvalue clusters using Lagrange spaces of arbitrary polynomial degree.  We fill this gap by showing that standard AFEM for eigenvalue clusters using polynomials of arbitrary degree also converge optimally.  While we consider only conforming simplicial meshes, we also provide a key step in extending such analysis to quadrilateral elements of any degree, meshes with hanging nodes, and discontinuous Galerkin methods; cf. \cite{BN10} for analysis of the source problem.  The analysis of \cite{Gal15} additionally does not immediately apply in the case of non-constant diffusion coefficients.  In contrast, our results extend as in \cite{DHZ15} to general symmetric second-order linear elliptic operators (see Remark \ref{rem:genops}).

 We briefly explain the difficulty which we resolve.   A key step in standard AFEM convergence proofs establishing a certain continuity between error indicators on adjacent mesh levels. In the case of multiple or clustered eigenvalues, the ordering and alignment of the discrete eigenfunctions may change between mesh levels even on fine meshes.  Thus $(u_{\ell, j}, \lambda_{\ell, j})$ and $(u_{\ell+1, j}, \lambda_{\ell+1, j})$ may not approximate the same eigenpair, making the comparison between $\eta_\ell (T, u_{\ell, j}, \lambda_{\ell, j})$ and $\eta_{\ell+1} (T, u_{\ell+1,j}, \lambda_{\ell+1, j})$ used in standard AFEM convergence proofs irrelevant.  A critical contribution was made in \cite{DHZ15}, where this problem was circumvented by first analyzing a theoretical (non-computable) AFEM based on error indicators $\mu_\ell(T)^2 := \sum_{j \in J} \mu_j(T, u_j, \lambda_j)^2$ aligned to the {\it fixed} continuous cluster.  $\mu_\ell$ may be viewed as an indicator for a ``pseudo-residual'' $-\Delta \Lambda_\ell u_j-\lambda_j P_\ell u_j$, where $\Lambda_\ell$ and $P_\ell$ are projections onto $\mathbb W_\ell$ defined later.  Given $\widetilde \theta \in (0,1]$, the marking  strategy consists of selecting the smallest set $\widetilde{\mathcal M_\ell} \subset \mathcal T_\ell$ of $\Omega$ with 
\begin{equation}\label{eq3-3}
\sum_{T \in \widetilde{\mathcal M_\ell}} \mu_\ell^2(T) \geq \widetilde \theta \sum_{T \in \mathcal T_\ell} \mu_\ell(T)^2,
\end{equation}
 Optimality is guaranteed (see \cite{Gal15}) provided  the initial mesh is fine enough and $\tilde \theta \leq \tilde \theta^*$ for some $\widetilde\theta^*\leq 1$ only depending on $\Omega$ and the initial subdivision $\mathcal T_0$.
To deduce optimal properties of the practical algorithm based on $\eta_\ell(T)$ instead of $\mu_\ell(T)$,  an elementwise equivalence of $\eta_{\ell}$ and $\mu_\ell$ is used to show that the corresponding AFEM are essentially equivalent.  
\modif{
The extension of the proofs given for this final step in \cite{DHZ15, Gal15} to the approximation of clustered eigenvalues using higher-degree finite element spaces is however unclear.
}
We provide (Lemma~\ref{lem:main}) a more direct proof of the equivalence between $\mu_\ell$ and $\eta_\ell$ which is valid for finite element spaces of arbitrary degree.   Extension of the AFEM convergence results for eigenvalue clusters of \cite{Gal15} to higher-degree finite element spaces is then immediate  (see Theorem~\ref{t:opt} and Corollary~\ref{c:opt}).  \modif{Our estimator equivalence argument could also be used with minimal modification to complete the proof of AFEM optimality for eigenvalue clusters given in \cite[Theorem 7.1]{DHZ15}.} \Red{It is however not immediately clear how to modify certain other estimates in \cite{DHZ15} in order to obtain cluster-robust results.}    


\modif{Our approach also yields} equivalence constants between $\mu_\ell$ and $\eta_\ell$ which are independent of essential quantities on sufficiently refined meshes, leading to a final result which is more robust with respect to the properties of the target cluster.
In view of \eqref{eq1-4}
 and \eqref{eq3-3}, optimal AFEM require $\theta \leq \theta^*:= C \widetilde \theta^*$, where $C < 1$ is independent of $N$, $\lambda_{n+1}$, and $\lambda_{n+N}$ (see Remark~\ref{r:theta}).  In contrast, the analysis of \cite{Gal15} implies that a stricter (smaller) choice of $\theta$ may be necessary as $N$ and the ratio $\lambda_{n+N}/\lambda_{n+1}$ increase.  As numerical experiments confirm (\S \ref{sec:numerics}), this improvement is important from a practical standpoint as it establishes that no knowledge of cluster properties is needed in order to choose $\theta$ correctly.  It also potentially increases computational efficiency by confirming that $\theta$ may be chosen reasonably large even when computing large clusters. On the other hand, \cite{Gal15} implies that $\theta$ should be taken to be small as $N$ increases, thus yielding a large number of adaptive loops and decreasing computational efficiency.  
 
The remainder of the note is structured as follows.  In \S\ref{sec:prelims} we give further assumptions and preliminaries.  In \S\ref{sec:main_result} we prove our main result and briefly sketch its application in the proof of AFEM convergence for eigenvalue clusters.  A brief set of numerical experiments in \S\ref{sec:numerics} illustrates our theoretical results.  
Finally note that we do not give a full proof of eigenvalue convergence, as most of the analysis of \cite{Gal15} holds verbatim for higher-order elements given the equivalence of the theoretical and practical error estimators.  We also largely employ the notation of that work.  Familiarity with \cite{Gal15} is thus essential to the reader.

\section{Preliminaries} \label{sec:prelims}

\subsection{Finite element meshes and spaces} \label{subsec:fespace}

 Let $\Omega$ be a polyhedral domain and denote by $\mathbb{T}$ the set of all conforming refinements of $\T_0$ obtainable by the newest-vertex bisection algorithm typically used in AFEM or its generalization to higher dimensions; cf. \cite{BinevDahmenDevore04, Ste07, Ste08,NV12}.
 By construction all $\T \in \mathbb T$ are uniformly shape regular. 

Given $\mathcal T \in \mathbb T$, we denote by $\V(\mathcal T) \subset H_0^1(\Omega)$ the space of continuous piecewise polynomials of arbitrary but fixed degree $r$ subordinate to $\mathcal T$. For each $\ell \ge 0$, we abbreviate $\V_\ell:= \V(\mathcal T_\ell)$. 
The AFEM produce sequences of subdivisions $\{\T_{\ell}\}_{\ell \ge 0} \subset \mathbb{T}$ such that $\T_{\ell+1}$ is a refinement of $\T_\ell$.  Thus $\V_{\ell} \subset \V_{\ell+1}$.

We also define several projection operators.  Let $P_\ell:L_2(\Omega) \rightarrow \mathbb W_\ell$  be the $L_2$ projection onto $\mathbb W_\ell$, and let $G_\ell:H_0^1(\Omega) \rightarrow \V_\ell$ be the Ritz projection.  That is,
\begin{align}
\label{eq3-1}
(P_\ell u, v) = (u, v), ~v \in \mathbb W_\ell, ~~~~ a(G_\ell u,v) = a(u, v), ~ v \in \V_\ell.
\end{align}
In addition, we define $\Lambda_\ell := P_\ell \circ G_\ell$.  Finally, $P:L_2(\Omega) \rightarrow \mathbb \mathbb W$ is the $L_2$ projection onto the continuous invariant subspace $\mathbb W$ corresponding to the target eigenvalue cluster.

\subsection{Error estimators and AFEM} \label{subsec:est_def}

Given a discrete eigenpair $(u_{\ell, j}, \lambda_{\ell,j})$ with $j \in J$ and $T \in \mathcal T_\ell$, the computable local error indicator $\eta_\ell(T)$ is given by
\begin{align}
\label{eq1-3}
\eta_\ell(T) ^2:= \sum_{j \in J} \left( h_T^2 \|\Delta u_{\ell, j} + \lambda_{\ell, j} u_{\ell, j}\|_T^2 + h_T \|\llbracket \nabla u_{\ell, j} \rrbracket \|_{\partial T}^2 \right), \qquad T \in \mathcal T_\ell.
\end{align}
Here $\llbracket \cdot \rrbracket$ is the jump across the element boundary, $\| \cdot\|_D$ is the $L_2$ norm over $D$ and $h_T := \textrm{diam}(T)$.  Let $|||\cdot|||=\sqrt{a(\cdot, \cdot)}$ be the energy norm.  The computable error estimator $\eta_\ell^2 :=\sum_{T \in \T_\ell} \eta_\ell(T)^2$ reliably controls \modif{$\sum_{j \in J} |||u_{\ell, j} - \Lambda_\ell u_{j}|||^2$} up to higher-order terms which can be proved to be negligible under the assumption that $H_0 := \max_{T \in \T_0} h_T$ is sufficiently small.  

The theoretical local indicator $\mu_\ell(T)$ is given by
\begin{align}
\label{eq3-2}
\mu_\ell(T)^2 := \sum_{j \in J} \left( h_T^2 \|\lambda_j P_\ell u_j + \Delta \Lambda_\ell u_j\|_T^2 + h_T \|\llbracket \nabla \Lambda_\ell u_j\rrbracket \|_{\partial T}^2\right).
\end{align}
A slight modification of Proposition 4.1 of \cite{Gal15} shows that $\sum_{T \in \T_{\ell}} \mu(T)^2$ also bounds $|||u-\Lambda_\ell u|||^2$ up to higher-order terms which are negligible if $H_0$ is sufficiently small.  In \cite{Gal15} the volume term is simplified to $h_T^2 \|\lambda_j P_\ell u_j\|_T^2$ due to the use of piecewise linear finite element spaces, and the definition in the higher-order case is left open with our choice given as one possibility (cf. Remark 9 (f) of \cite{Gal15}).  Our definition of $\mu_\ell$ also differs from that given in \cite{DHZ15}.   \modif{The difference does not appear to be essential} \Red{for purposes of proving estimator equivalence.  However, certain other estimates needed to obtain a cluster-robust final result have thus far only been obtained using the choice of $\mu_\ell$ suggested in \cite{Gal15} and used here. }  


\subsection{Assumptions on the initial mesh} \label{subsec:cluster_properties}

Proofs of optimal convergence rates for adaptive FEM require assumptions on the eigenspaces and their resolution by the initial mesh.  First, the continuous eigencluster must be separated from the remainder of the spectrum, i.e., $\lambda_n< \lambda_{n+1} \le ... \le \lambda_{n+N} < \lambda_{n+N+1}$ (here we take $\lambda_0=0$).  In addition, the discrete cluster respects this separation in that $\lambda_{\ell, n} < \lambda_{n+1}$ and $\lambda_{\ell, n+N} < \lambda_{n+N+1}$.  Let then $M_J:=\sup_{\T_\ell \in \mathbb{T}} \max_{j \in \{1, ..., {\rm dim} \V_\ell \} \setminus J} \max_{k \in J} \frac{\lambda_k}{|\lambda_{\ell, j} - \lambda_k|}$.  The implied requirement on the initial mesh may be more or less strict depending on the separation of the cluster from the remainder of the spectrum. 

Reliability of our a posteriori estimators is only guaranteed for sufficiently fine initial mesh $\T_0$.  
We have for example from Proposition 4.1 of \cite{Gal15} that 
\begin{align}
\label{eq3-30}
|||u_j-\Lambda_\ell u_j |||^2 \le C \left[\sum_{T \in \T_\ell} \mu_\ell(T)^2 + \lambda_j^2 (1+M_J)^2 H_0^{2s}|||u_j-\Lambda_\ell u_j|||^2\right].
\end{align}
Thus the estimator is reliable when $H_0$ is small enough to guarantee that 
$
C\lambda_j (1+M_J)^2 H_0^{2s} \le \frac{1}{2}.
$  
This assumption is a priori and cannot be rigorously verified computationally.  Any assumption on the resolution of eigenvalues {\it within} the cluster is however avoided.   Cf. \cite{Gal15, BO89, GMZ09} for more discussion.

\section{Main result} \label{sec:main_result}

\subsection{Equivalence of theoretical and practical estimators} \label{subsec:equivalence}

Below is our main result.  

\begin{lemma} [Estimator Equivalence]\label{lem:main}
Assume that \modif{$H_0=\max_{T \in \T_0} h_T$} is small enough so that 
\begin{align}
\label{eq3-9}
\max_{j \in J} \|u_j- \Lambda_\ell u_j\|_\Omega \le \sqrt{1+(2N)^{-1}} -1, ~~~j\in J.  
\end{align}
Then for $T \in \T_\ell$,
\begin{align}
\label{eq3-10}
\mu_\ell(T)^2 \le \frac{3}{2} \eta_\ell(T)^2 \le 3 \mu_\ell(T)^2.
\end{align}
\end{lemma}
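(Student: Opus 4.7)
\emph{Proof plan.} The approach is to write the elementwise residuals underlying $\mu_\ell(T)$ as a single linear recombination of those underlying $\eta_\ell(T)$, reducing the two-sided bound to a spectral estimate on the Gram matrix of $\{\Lambda_\ell u_j\}_{j\in J}$. The argument is then independent of the polynomial degree $r$ and of the differential operator.

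First I would establish a pointwise residual identity. Since $\Lambda_\ell u_j \in \mathbb{W}_\ell$, expand $\Lambda_\ell u_j = \sum_{k\in J} c_{kj}\,u_{\ell,k}$ with $c_{kj}=(\Lambda_\ell u_j,u_{\ell,k})=(G_\ell u_j,u_{\ell,k})$; the last equality holds because $u_{\ell,k} \in \mathbb{W}_\ell$. Setting $d_{kj}:=(u_j,u_{\ell,k})$, Galerkin orthogonality for $G_\ell$ combined with the continuous and discrete eigenvalue equations produces the key cancellation
\[
\lambda_{\ell,k}\,c_{kj} \;=\; a(u_{\ell,k},G_\ell u_j) \;=\; a(G_\ell u_j,u_{\ell,k}) \;=\; a(u_j,u_{\ell,k}) \;=\; \lambda_j\,d_{kj}.
\]
Letting $\mathbf{r}_k := -\Delta u_{\ell,k}-\lambda_{\ell,k}u_{\ell,k}$ and $\mathbf{R}_j := -\Delta\Lambda_\ell u_j-\lambda_j P_\ell u_j$, substituting $-\Delta u_{\ell,k}=\mathbf{r}_k+\lambda_{\ell,k}u_{\ell,k}$ together with $P_\ell u_j = \sum_k d_{kj}\,u_{\ell,k}$ into $\mathbf{R}_j$ and using the cancellation above to annihilate the $u_{\ell,k}$-terms yields the pointwise identity
\[
\mathbf{R}_j = \sum_{k\in J} c_{kj}\,\mathbf{r}_k \qquad \text{on every } T\in\T_\ell.
\]
Linearity of the gradient jumps simultaneously gives $\llbracket \nabla\Lambda_\ell u_j\rrbracket = \sum_k c_{kj}\,\llbracket \nabla u_{\ell,k}\rrbracket$, so the volume and edge residuals of $\mu_\ell(T)$ are obtained from those of $\eta_\ell(T)$ by the \emph{same} linear map $C^T$, where $C=(c_{kj})_{k,j\in J}$.

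Next I would control the spectrum of the Gram matrix $A\in \mathbb{R}^{N\times N}$ with $A_{ij}=(\Lambda_\ell u_i,\Lambda_\ell u_j)=(C^TC)_{ij}$. Expanding $\Lambda_\ell u_j = u_j - (u_j - \Lambda_\ell u_j)$ and using the $L_2$-orthonormality of $\{u_j\}_{j\in J}$, the hypothesis \eqref{eq3-9} turns into $|A_{ij}-\delta_{ij}| \le (2N)^{-1}$, hence $\|A-I\|_2 \le \|A-I\|_F \le \tfrac12$. Therefore $\mathrm{spec}(A)\subset[1/2,3/2]$, and since $A=C^TC$ and $CC^T$ share their nonzero eigenvalues, the same interval contains $\mathrm{spec}(CC^T)$.

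Finally I would assemble the estimate. Using the residual identity,
\[
\sum_{j\in J}\|\mathbf{R}_j\|_T^2 \;=\; \langle \mathbf{r},\,CC^T\mathbf{r}\rangle_{L_2(T)^N} \;\in\; \Bigl[\tfrac12,\tfrac32\Bigr]\sum_{k\in J}\|\mathbf{r}_k\|_T^2,
\]
and the identical two-sided bound holds for the jump sums. Weighting by $h_T^2$ and $h_T$ and adding gives $\tfrac12\eta_\ell(T)^2 \le \mu_\ell(T)^2 \le \tfrac32\eta_\ell(T)^2$, which rearranges to \eqref{eq3-10}. The main obstacle is the first step: discovering that the Ritz projection hidden inside $\Lambda_\ell$ produces exactly the cancellation $\lambda_{\ell,k}c_{kj}=\lambda_j d_{kj}$ is what lets a \emph{single} matrix $C$ realign the two different pairs $(u_{\ell,k},\lambda_{\ell,k})$ and $(u_j,\lambda_j)$, so that the comparison collapses to $N\times N$ linear algebra on a Gram matrix forced close to the identity by \eqref{eq3-9}.
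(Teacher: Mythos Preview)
Your proof is correct and follows essentially the same route as the paper: the residual identity $\mathbf{R}_j=\sum_k c_{kj}\mathbf{r}_k$ (your cancellation $\lambda_{\ell,k}c_{kj}=\lambda_j d_{kj}$ is exactly the paper's computation \eqref{eq3-11}), the reduction to the $N\times N$ matrix $C$ (the paper's $M=C^T$), and the spectral bound on the Gram matrix $A=C^TC=MM^T$ (the paper's $B$) placing its eigenvalues in $[1/2,3/2]$. The only cosmetic differences are that you bound $\|A-I\|_2$ via the Frobenius norm while the paper invokes Gershgorin's theorem using the same entrywise estimate $|A_{ij}-\delta_{ij}|\le(2N)^{-1}$, and that you pass through the quadratic form $\langle\mathbf{r},CC^T\mathbf{r}\rangle$ directly whereas the paper bounds $\|M\|_2$ and $\|M^{-1}\|_2$ separately; both packagings yield the identical constants.
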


\begin{proof}
Using \eqref{eq1-1}, \eqref{eq1-2}, and \eqref{eq3-1}, we compute
\begin{align}
\label{eq3-11}
\begin{aligned}
\lambda_j P_\ell u_j&=  \lambda_j \sum_{m \in J} (u_j, u_{\ell,m}) u_{\ell,m}   = \sum_{m \in J} a(u_j, u_{\ell,m}) u_{\ell,m} 
\\ & = \sum_{m \in J} a(G_\ell u_j, u_{\ell,m}) u_{\ell,m} = \sum_{m \in J} \lambda_{\ell,m} (G_\ell u_j, u_{\ell,m}) u_{\ell,m}
\\ &  = \sum_{m \in J} \lambda_{\ell,m} (\Lambda_\ell u_j, u_{\ell,m}) u_{\ell,m}.
\end{aligned}
\end{align}
Also,
\begin{align}
\label{eq3-12}
\Lambda_\ell u_j = \sum_{m \in J} (\Lambda_\ell u_j, u_{\ell,m}) u_{\ell,m},
\end{align}
Combining the above two equations yields
\begin{align}
\label{eq3-13}
\begin{aligned}
\lambda_j P_\ell u_j + \Delta \Lambda_\ell u_j & = \sum_{m \in J} (\Lambda_\ell u_j, u_{\ell,m}) [\lambda_{\ell,m} u_{\ell,m} + \Delta u_{\ell,m}],  
\\ \llbracket \nabla \Lambda_\ell u_j \rrbracket & = \sum_{m \in J} (\Lambda_\ell u_j, u_{\ell,m})\llbracket \nabla u_{\ell,m} \rrbracket .
\end{aligned}
\end{align}

We now define the vectors $V:=[\lambda_j P_\ell u_j+ \Delta \Lambda_\ell u_j]$, $V_\ell := [\lambda_{\ell,m} u_{\ell,m} + \Delta u_{\ell,m}]$, $W:=[\llbracket \nabla \Lambda_\ell u_j \rrbracket]$, and \modif{$W_\ell:=[\llbracket \nabla u_{\ell,j} \rrbracket]$}.  Here  $n+1 \le j,m \le n+N$.  These quantities vary with $x \in \Omega$ (for $V$ and $V_\ell$) or $x$ lying on the mesh skeleton (for $W$ and $W_\ell$), but this fact has little significance for the time being.  We also define the matrix $M:=[(\Lambda_\ell u_j, u_{\ell,m})]$, with $j$ the row and $m$ the column index.  The relationships \eqref{eq3-13} may now be written
\begin{align}
\label{eq3-14}
V=MV_\ell, ~~~W=M W_\ell.
\end{align}
Let $\|\cdot\|_2$ denote the operator $\ell_2(\mathbb{R}^N)$-norm and $|\cdot|$ the Euclidean length.  The proof of Lemma \ref{lem:main} is thus reduced to showing that 
\begin{align}
\label{eq3-14-a}
\|M\|_2 \le \sqrt{3/2}, ~~~~ \|M^{-1}\|_2 \le \sqrt{2}
\end{align}
under the stated hypotheses.  

We now employ several basic facts from linear algebra.  Letting $v \in \mathbb{R}^N$, we have that $\|Mv\|_2^2=v^{t} M^{t} M v \le \|M^{t} M\|_2 |v|^2$ and thus $\|M\|_2^2 \le \|M^{t} M\|_2$.  Because $M^{t} M$ is symmetric and positive, $\|M^{t} M\|_2$ is equal to the maximum eigenvalue of $M^{t} M$.  

A short computation shows that $B:=M M^{t}= [(\Lambda_\ell u_j, \Lambda_\ell u_m)]$.  A standard result from linear algebra is that $AC$ and $CA$ are isospectral for square $A, C$, so $B$ is isospectral with $M^{t} M$.  Therefore we have that $\|M^{t} M \|_2 = \|B\|_2$, and both quantities are equal to the maximum eigenvalue of $B$.  The matrix $B$ was analyzed extensively in the proof of Lemma 5.1 of \cite{Gal15}.  In particular, $B$ is nonsingular under the condition \eqref{eq3-9}, and by (5.2) and following of \cite{Gal15}, we have
\begin{equation}
\label{eq3-15}
\frac{2N-1}{2N}  \le B_{ii} \le \frac{2N+1}{2N} \qquad  \text{and} \qquad  \sum_{j \neq i} B_{ij}  \le \frac{N-1}{2N}.
\end{equation}
Gershgorin's theorem thus gives that the eigenvalues $\{\sigma_i\}$ of $B$ satisfy
\begin{align}
\label{eq3-16}
\frac{1}{2}=\frac{2N-1}{2N}-\frac{N-1}{2N} \le \sigma_i \le \frac{2N+1}{2N}+\frac{N-1}{2N} = \frac{3}{2}, ~1 \le i \le N.
\end{align}
Thus $\|M\|_2^2 \le \|M^{t} M\|_2 = \|M M^{t}\|_2\le \frac{3}{2}$, which is the first inequality in \eqref{eq3-14-a}.  
 
 The invertibility of $B$ guarantees the invertibility of $M$.   Computing as above yields $\|M^{-1}\|_2^2 \le \|(M^{-1})^{t} M^{-1}\|_2 =\|B^{-1}\|_2$.  Because $B$ is positive and diagonalizable, we have from \eqref{eq3-16} that $\|B^{-1}\|=\frac{1}{\min_{1 \le i \le N} \sigma_i} \le 2$, thus completing the proof of the second inequality in \eqref{eq3-14-a}.  
\end{proof}

\begin{remark}[{Mesh Fineness Assumption}] {
In \cite{Gal15} it is shown for piecewise linear elements that under the assumption \eqref{eq3-9}, 
\begin{align}
\label{eq3-17}
\mu_\ell(T)^2 \le N (\lambda_{n+N}/\lambda{n+1})^2 \eta_\ell(T)^2 \le (\lambda_{n+M}/\lambda_{n+1})^4 (2N^2 + 4N^3) \mu_\ell(T)^2.  
\end{align}
The first bound in \eqref{eq3-17} holds with no restriction on $H_0$.  Our proof thus yields an improved cluster-independent bound under the fineness assumption \eqref{eq3-9}.  \eqref{eq3-10} also gives an improved bound for $\eta_\ell$ in terms of $\mu_\ell$.  Significantly loosening the restriction \eqref{eq3-9} appears to be substantially more difficult.  $M$ is invertible for $\eps < \sqrt{1+N^{-1}}-1$, but the obtained upper bound for $\|M^{-1}\|_2$ degenerates as $\eps \uparrow \sqrt{1+N^{-1}}-1$.  The condition $\eps< \sqrt{1+N^{-1}}-1$ is not substantially weaker than \eqref{eq3-9}, and control over the equivalency constants is lost as $\eps \uparrow \sqrt{1+N^{-1}}-1$, so we retain \eqref{eq3-9} from \cite{Gal15}.  
}\end{remark}

\begin{remark}[{Alternate proof of estimator equivalence}]
In Lemma 3.3 of \cite{DHZ15} it is shown that if $\lambda_{n+1}=\lambda_{n+N}$ and $\V_\ell$ is of arbitrary polynomial degree,
\begin{align}
\label{eq3-17-a}
\tilde{\mu}_\ell (T)^2 \le N \eta_\ell (T)^2 \le N^2 \widetilde{C} \left ( \mu(T)^2 + \sum_{j \in J} \sum_{k \in J} |\lambda_{\ell, j} -\lambda_{\ell, k}|^2 \modif{h_T^2} \|u_{\ell, k}\|_T^2 \right ),
\end{align}
where $\widetilde{C} \rightarrow 1$ as $H_0 \rightarrow 0$ and $\tilde{\mu}_\ell$ is similar to $\mu_\ell$.  In our analysis the obtained bounds are independent of $N$ and the double sum term on the right is completely absent.  The absence of the double sum term on the right is important, as \modif{it is of higher order for multiple but not for clustered eigenvalues}; cf. Remark 9 (f) of \cite{Gal15}.  \Red{Usable results for clusters may be obtained by summing \eqref{eq3-17-a} over each multiple eigenvalue in a cluster, but the constants in and asymptotic nature of the resulting estimate then are not cluster-robust.}
\end{remark}

\begin{remark}[{General symmetric operators}] \label{rem:genops} { In \cite{DHZ15} AFEM optimality is analyzed for eigenvalues of arbitrary second-order symmetric operators $-\nabla \cdot (A \nabla u) + cu$. 
Letting $\mu_\ell(T)^2 = h_T^2 \|\lambda_j P_\ell u_j + \nabla \cdot (A \nabla \Lambda_\ell u_j) - c \Lambda_\ell u_j\|_T^2 + h_T \|\llbracket A \nabla \Lambda_\ell u_j \rrbracket \|_{\partial T}^2$, our equivalence analysis extends immediately to the general case.  We consider only $A=I$, $c=0$ because analyzing optimality in the general case involves consideration of data oscillation, which would make our presentation much more involved; cf \cite{DHZ15}.  
}
\end{remark}  

\subsection{AFEM optimality} \label{subsec:convergence} 
Let
\modif{
$$
\mathcal A_\sigma^J := \left\lbrace \mathbf v = (v_1,...,v_J) \in H_0^1(\Omega)^J: |\mathbf v|_{\mathcal  A_\sigma^J}   <\infty \right\rbrace, 
$$
where $0<\sigma \leq r/d$ and
$$
 |\mathbf v|_{\mathcal  A_\sigma^J}:= \sup_{m \in \mathbb{N}} m^\sigma \inf_{\T \in \mathbb{T}, \#\T -\# \T_0 \le m} \inf_{\boldsymbol\phi =(\phi_1,...,\phi_J) \in \mathbb V(\mathcal T)^J} \left( \sum_{j=1}^J||| v_j - \phi_j |||^2\right)^{1/2}.
$$
}
Given Lemma \ref{lem:main}, the analysis of \cite{Gal15} mostly holds verbatim for higher-order finite element spaces.  In particular, the following counterpart to Theorem 3.1 of \cite{Gal15} holds.  

\begin{theorem}[Quasi-Optimal Approximation of Eigenspaces]\label{t:opt}
Assume that the bulk parameter $\theta<1$ and initial mesh size $H_0$ are sufficiently small.
Assume that for some $0<\sigma \leq r/d$,  \modif{$\mathbf u:=(u_{n+1},...,u_{n+N}) \in \mathcal A_\sigma^J$}.  Then there exists $C>0$ depending possibly on $\T_0$ but independent of other essential quantities such that
\begin{align}
\label{eq3-19}
\left(\sum_{j \in J} |||u_j-\Lambda_\ell u_j|||^2\right)^{\frac12} \le C (\# \T_{\ell} - \# \T_0)^{-\sigma} \modif{ |\mathbf u |_{\mathcal A_\sigma^J}}.  
\end{align}
\end{theorem}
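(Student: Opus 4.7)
The plan is to reduce to Gallistl's quasi-optimality analysis \cite{Gal15} of the theoretical AFEM driven by $\mu_\ell$, using Lemma~\ref{lem:main} to relate the practical and theoretical marking strategies. Inspection of \cite{Gal15} shows that every ingredient of its Theorem~3.1 --- contraction of a theoretical quasi-error combining $\sum_{j\in J}|||u_j-\Lambda_\ell u_j|||^2$ and $\sum_{T}\mu_\ell(T)^2$, discrete upper/lower bounds for $\mu_\ell$, the quasi-orthogonality involving $P_\ell$ and $\Lambda_\ell$, and the Stevenson-type optimal-marking/approximation-class bookkeeping --- is built from algebraic identities (purely spectral in the case of cluster drift) and standard bubble/inverse-estimate machinery that is insensitive to the polynomial degree $r$. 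Once $\mu_\ell$ is defined via the full higher-order residual as in \eqref{eq3-2}, each of these steps carries over verbatim to Lagrange spaces of arbitrary degree, and quasi-optimality of the theoretical AFEM with $\widetilde\theta\le\widetilde\theta^*$ follows without further work. Reliability up to higher-order terms is supplied by \eqref{eq3-30} under the assumption that $H_0$ is sufficiently small.

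The step specific to the practical AFEM is to show that the D\"orfler criterion \eqref{eq1-4} for $\eta_\ell$ implies the theoretical one \eqref{eq3-3} for $\mu_\ell$ with a comparable parameter. From $\eta_\ell(T)^2\le 2\mu_\ell(T)^2$ and $\mu_\ell(T)^2\le\tfrac{3}{2}\eta_\ell(T)^2$, which are immediate from \eqref{eq3-10}, a short calculation gives
\begin{equation*}
\sum_{T\in\M_\ell}\mu_\ell(T)^2 \ge \tfrac{1}{2}\sum_{T\in\M_\ell}\eta_\ell(T)^2 \ge \tfrac{\theta}{2}\sum_{T\in\T_\ell}\eta_\ell(T)^2 \ge \tfrac{\theta}{3}\sum_{T\in\T_\ell}\mu_\ell(T)^2.
\end{equation*}
Choosing $\theta\le\theta^*:=3\widetilde\theta^*$, the practical marking automatically satisfies \eqref{eq3-3} with $\widetilde\theta=\theta/3\le\widetilde\theta^*$. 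Since the equivalence constants in Lemma~\ref{lem:main} are independent of $N$ and of the ratio $\lambda_{n+N}/\lambda_{n+1}$, the same uniform comparison shows that a minimal $\eta_\ell$-D\"orfler set is at most a fixed multiple larger than a minimal $\mu_\ell$-D\"orfler set for parameter $\widetilde\theta$, which is what the optimal-marking step of the theoretical analysis controls. Combining this cardinality comparison with the theoretical contraction and the newest-vertex bisection complexity of \cite{BinevDahmenDevore04, Ste07} yields the claimed rate $(\#\T_\ell-\#\T_0)^{-\sigma}|\mathbf u|_{\mathcal A_\sigma^J}$.

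The main obstacle is bookkeeping rather than substance: one must verify that each constant in the theoretical part of \cite{Gal15} which appears to depend on $N$ or on cluster-ratio quantities in fact remains bounded in terms only of $\Omega$, $\T_0$, $r$, and the separation of the target cluster from the remainder of the spectrum, once Lemma~\ref{lem:main} is used in place of the estimator equivalence used in \cite{Gal15}. This is where the $N$-independence of our equivalence is essential and is what secures the cluster-robust nature of $\theta^*$ asserted in the introduction and Remark~\ref{r:theta}. Everything else --- efficiency estimates, inverse inequalities, and localization --- generalizes with constants depending only on $r$ and shape regularity and does not interact with the spectral parameters of the cluster.
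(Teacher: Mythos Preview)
Your proposal is correct and follows the same route as the paper: the paper does not give a standalone proof of this theorem but simply states that, given the estimator equivalence of Lemma~\ref{lem:main}, the analysis of \cite{Gal15} ``mostly holds verbatim for higher-order finite element spaces,'' and your write-up is a faithful elaboration of exactly that strategy (theoretical AFEM analysis for $\mu_\ell$ carried over degree-independently, then transferred to the practical AFEM via the elementwise equivalence \eqref{eq3-10}). One small point of bookkeeping: your displayed chain shows that practical $\theta$-marking implies theoretical $(\theta/3)$-marking, which is the input to the contraction step, but the threshold $\widetilde\theta^*$ in \cite{Gal15} enters through the \emph{optimal-marking/cardinality} step, where the relevant direction is the reverse implication (a theoretical D\"orfler set is also a practical one with a reduced parameter); tracking that direction gives $\theta^*=\widetilde\theta^*/3$, consistent with the paper's statement in the introduction that $\theta^*=C\widetilde\theta^*$ with $C<1$.
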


Several remarks are in order.
\modif{
\begin{remark}[Approximation classes for clusters]\label{r:cluster}
We consider approximation classes  $\mathcal A_\sigma^J$ for the entire cluster of eigenfunctions, thereby imposing that all the eigenfunctions in the cluster can be approximated simultaneously (on the same subdivision) with rate $\sigma$.  It is equivalent to requiring that each eigenfunction belongs to 
$$
\mathcal A_\sigma:= \left\lbrace v  \in H_0^1(\Omega):  | v |_{\mathcal A_\sigma}:= \sup_{m \in \mathbb{N}} m^\sigma \inf_{\T \in \mathbb{T}, \#\T -\# \T_0 \le m} \inf_{\phi \in \mathbb V(\mathcal T)} ||| v - \phi |||  <\infty \right\rbrace, 
$$ 
but the norm equivalence constant depends on $J$.
Our choice of approximation class guarantees that the constant in estimate \eqref{eq3-19} is independent of $J$. 
\end{remark}
}

\begin{remark}[Equivalent approximation classes] 
In \cite{Gal15} optimality is expressed with respect to the class
$$
\mathcal B_\sigma := \left\lbrace v \in H_0^1(\Omega): |v|_{\mathcal  B_\sigma}:= \sup_{m \in \mathbb{N}} m^\sigma \inf_{\T \in \mathbb{T}, \#\T -\# \T_0 \le m} \| \nabla v - P_{\mathbb D(\mathcal T)} \nabla v \|   <\infty \right\rbrace, 
$$
where $P_\mathbb{D(\mathcal T)}$ is the $L_2$-projection onto $\mathbb{D}(\mathcal T)$, the space of piecewise polynomials of degree $r-1$ subordinate to $\mathcal T$.  Employing this class requires  proving equivalence between 
$\mathcal A_\sigma$ \modif{(see Remark~\ref{r:cluster})} and $\mathcal B_\sigma$, that is, approximation by functions in $\mathbb{D}(\mathcal T)$ is equivalent to approximation by gradients of functions in $\V(\mathcal T)$.  
In \cite{Gal15}, the needed result is obtained by citing \cite{CPS12, Gud10}, in which the equivalence is shown up to data oscillation terms.  However, the necessary equivalence has recently been shown to hold on arbitrary meshes without data oscillation terms in \cite{Vee15}, which could simplify the proof of Proposition 3.1 of \cite{Gal15}.  
\end{remark}

\begin{remark}[{Robustness with respect to the cluster size}]\label{r:theta}
{The dependence of $H_0$ and $\theta$ on various quantities is given in detail in \cite{Gal15}.   The threshold condition for $\theta$ depends on the second equivalence constant in \eqref{eq3-10}.  Thus \cite{Gal15} requires $0 < \theta \le [C_2 (\lambda_{n+N}/\lambda_{n+1})^4(2N^2+4N^3)]^{-1}$ (cf. Lemma 7.3), where $C_2$ is independent of essential quantities (especially cluster properties).  We only require $0 < \theta \le 3 C_2^{-1}$, thus establishing that rate-optimal adaptive convergence holds with the marking parameter chosen independent of the cluster properties.   The analysis of \cite{Gal15} however still indicates that the asymptotic nature of the optimality result may be more or less pronounced depending on cluster properties (cf. \eqref{eq3-30}).  
} \end{remark}

Finally, optimal approximation of eigenvalues follows as in \cite[Corollary 3.1]{Gal15}. 

\begin{corollary}[Quasi-Optimal Approximation of Eigenvalues clusters]\label{c:opt}
Under the conditions of Theorem~\ref{t:opt} there exists $C_{n,N}>0$ such that for $n+1 \le j \le n+N$, 
$$
| \lambda_{\ell,j}-\lambda_j |   \le C_{n,N} (\# \T_{\ell} - \# \T_0)^{-2\sigma}\sum_{j \in J} |u_j|_{\mathcal A_\sigma}^2.
$$
\end{corollary}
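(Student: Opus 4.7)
The plan is to follow the argument of Corollary 3.1 in \cite{Gal15}, whose core is the classical fact that for symmetric elliptic operators the eigenvalue error depends \emph{quadratically} on the energy-norm error of the eigenfunctions. With Theorem~\ref{t:opt} in hand, this quadratic dependence immediately converts the $(\#\T_\ell - \#\T_0)^{-\sigma}$ rate for eigenfunctions into the $(\#\T_\ell - \#\T_0)^{-2\sigma}$ rate claimed for eigenvalues.

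The first step is to establish a Babu\v{s}ka--Osborn-type inequality of the form
$$
|\lambda_{\ell,j} - \lambda_j| \le C_{n,N}\sum_{k \in J} |||u_k - \Lambda_\ell u_k|||^2 + \textrm{h.o.t.},
$$
valid for each $j \in J$.  The derivation tests the continuous and discrete eigenvalue equations \eqref{eq1-1}--\eqref{eq1-2} against suitable combinations of $u_k$, $u_{\ell,m}$, and $\Lambda_\ell u_j$, expands $\lambda_{\ell,j}$ via the Rayleigh quotient of $u_{\ell,j}$, and tracks how the alignment of the two bases inside the cluster enters through the matrix $M = [(\Lambda_\ell u_j, u_{\ell,m})]$ introduced in the proof of Lemma~\ref{lem:main}.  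The bounds \eqref{eq3-14-a} on $\|M\|_2$ and $\|M^{-1}\|_2$, valid under \eqref{eq3-9}, guarantee that this alignment does not blow up.  The remaining higher-order terms are of the form $\|u_k - \Lambda_\ell u_k\|_\Omega^2$, which are absorbed by a standard Aubin--Nitsche duality argument on sufficiently fine initial meshes.

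Next I would combine this inequality with Theorem~\ref{t:opt} to obtain
$$
|\lambda_{\ell,j} - \lambda_j| \le C_{n,N} (\#\T_\ell - \#\T_0)^{-2\sigma}\, |\mathbf u|_{\mathcal A_\sigma^J}^2.
$$
Finally, by the equivalence recorded in Remark~\ref{r:cluster} one has $|\mathbf u|_{\mathcal A_\sigma^J}^2 \le C_N \sum_{j \in J} |u_j|_{\mathcal A_\sigma}^2$ with $C_N$ depending only on $N$; absorbing $C_N$ into the constant yields the stated bound.

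The main obstacle is the first step: establishing the Babu\v{s}ka--Osborn-type inequality with the correct indexing in the cluster case.  One cannot simply identify $\lambda_{\ell,j}$ with the discrete eigenvalue closest to $\lambda_j$, but must pair the ordered discrete and continuous eigenvalues within the cluster and exploit the fact that the trace of the restriction of the operator to $\mathbb W$ (respectively $\mathbb W_\ell$) equals the sum of the eigenvalues in the cluster.  Once this is accomplished in the spirit of \cite[\S 7]{Gal15} or classical Babu\v{s}ka--Osborn theory, the remaining steps are routine; the dependence of $C_{n,N}$ on $n$ and $N$ enters through inverse spectral-gap factors involving $\lambda_{n+1} - \lambda_n$ and $\lambda_{n+N+1} - \lambda_{n+N}$, consistent with the non-cluster-robust nature of the constant in the statement.
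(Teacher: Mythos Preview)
Your proposal is correct and matches the paper's approach, which simply states that the result ``follows as in \cite[Corollary 3.1]{Gal15}'' without giving further details. You have in fact supplied more of the argument than the paper does: the quadratic Babu\v{s}ka--Osborn bound on eigenvalue errors in terms of the cluster energy error, followed by squaring the rate from Theorem~\ref{t:opt} and passing from $|\mathbf u|_{\mathcal A_\sigma^J}$ to $\sum_{j\in J}|u_j|_{\mathcal A_\sigma}$ via Remark~\ref{r:cluster}, is precisely the content of \cite[Corollary 3.1]{Gal15}.
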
 

Note that the constant $C_{n,N}$ above depends on properties of the eigenvalue cluster; cf. \cite[Corollary 3.1]{Gal15} for a precise expression.

\section{Numerical experiments} \label{sec:numerics} 

To illustrate our result, consider the slit domain 
 $$
 \Omega := (-1,1)^2 \setminus \left(\begin{array}{l} 
 \mathrm{conv}\lbrace (0.5,0),(1,0)\rbrace \cup  \mathrm{conv}\lbrace (0,0.5),(0,1)\rbrace \cup  \\
 \mathrm{conv}\lbrace (-0.5,0),(-1,0)\rbrace \cup  \mathrm{conv}\lbrace (0,-0.5),(0,-1)\rbrace 
 \end{array} \right)
$$
subdivided into $16$ squares, each of diameter $0.5$. 
The finite element spaces consist of continuous piecewise polynomials of degree $r=1$, $2$, or $3$ in each coordinate direction.
The adaptive algorithm is implemented within the \emph{deal.ii} \cite{dealii} library.  We take $n=0$ and consider the cases $N=4, 12$.  Our highest-precision computations indicate that the first twelve eigenvalues (truncated to 6 decimal places) are 10.147392, 17.662596, 17.662596, 19.739208, 26.101811, 38.349159, 38.349159, 46.553966, 49.149607, 49.149607, 49.348022, and 49.348022. In Figure~\ref{f:estimators}, we plot the error estimators $\eta_l:=\left(\sum_T\eta_l^2(T)\right)^{1/2}$ against the total number of degrees of freedom for different values of marking parameters $\theta$ and cluster sizes $N$. 

We do not know the exact eigenfunctions and so cannot plot actual errors. However, in the case of the Laplacian \cite{GMZ09} guarantees plain convergence (without rates) of AFEM for simple and multiple eigenvalues to the corresponding continuous eigenpairs.  These results rely on completely differently proof techniques than do ours, and adaptation to the case of clustered eigenvalues is straightforward.  The analysis of \cite{GMZ09} also guarantees that $\max_{T \in \T_\ell} h_T \rightarrow 0$ as $\ell \rightarrow \infty$, which yields reliability of $\eta_\ell$ for $\ell$ sufficiently large (cf. \eqref{eq3-30}).  It is thus meaningful to track $\eta_\ell$ instead of actual errors.  In addition, employing a cluster-independent marking parameter as suggested by our theory is reasonable also in the pre-asymptotic range as the plain convergence analysis of \cite{GMZ09} guarantees that the algorithm will eventually reach the asymptotic range.  

Optimal decay rates of $-r/2$ are observed provided $\theta\le .95$ when $r=2,3$ and $\theta \le 0.9$ when $r=1$.
According to Corollary~\ref{c:opt} ($\sigma = 1/2$), this indicates an optimal decay rate of $-r$ for the error in approximating each eigenvalue in the cluster.
As guaranteed by our analysis, the range of $\theta$ for which optimality is recovered is not affected by the cluster properties.  In contrast, following \cite{Gal15} as explained in Remark \ref{r:theta} leads to the restriction $\theta \le [C_2 (\lambda_{n+N}/\lambda_{n+1})^4 (2N^2+4N^3)]^{-1}$ with $C_2$ independent of essential quantities.  For our particular computations, this yields:
\begin{align*}
\begin{aligned}
\label{theta:calc}
\theta & \le  C_2^{-1} \left ( \frac{19.739208}{10.147392} \right )^{-4} \frac{1}{2 \cdot 4^2 + 4 \cdot 4^3} \approx  2.42 \times 10^{-4}  C_2^{-1}, ~~~n=0, ~N=4,
\\ \theta & \le C_2^{-1} \left ( \frac{49.348022}{10.147392} \right ) ^{-4} \frac{1}{2 \cdot 12^2 + 4 \cdot 12^3} \approx 2.48 \times 10^{-7} C_2^{-1}, ~~~n=0, ~N=12.
\end{aligned}
\end{align*}
Thus following precisely the theory of \cite{Gal15} would lead to a thousand-fold reduction in $\theta$ when moving from our first to our second computational example.  This would potentially require a massive increase in the number of AFEM iterations required in order to achieve a given error reduction.  We have demonstrated theoretically and confirmed computationally that this increase in computational expense is unnecessary.

\begin{figure}
\begin{tabular}{ccc}
\includegraphics[width=0.3\textwidth]{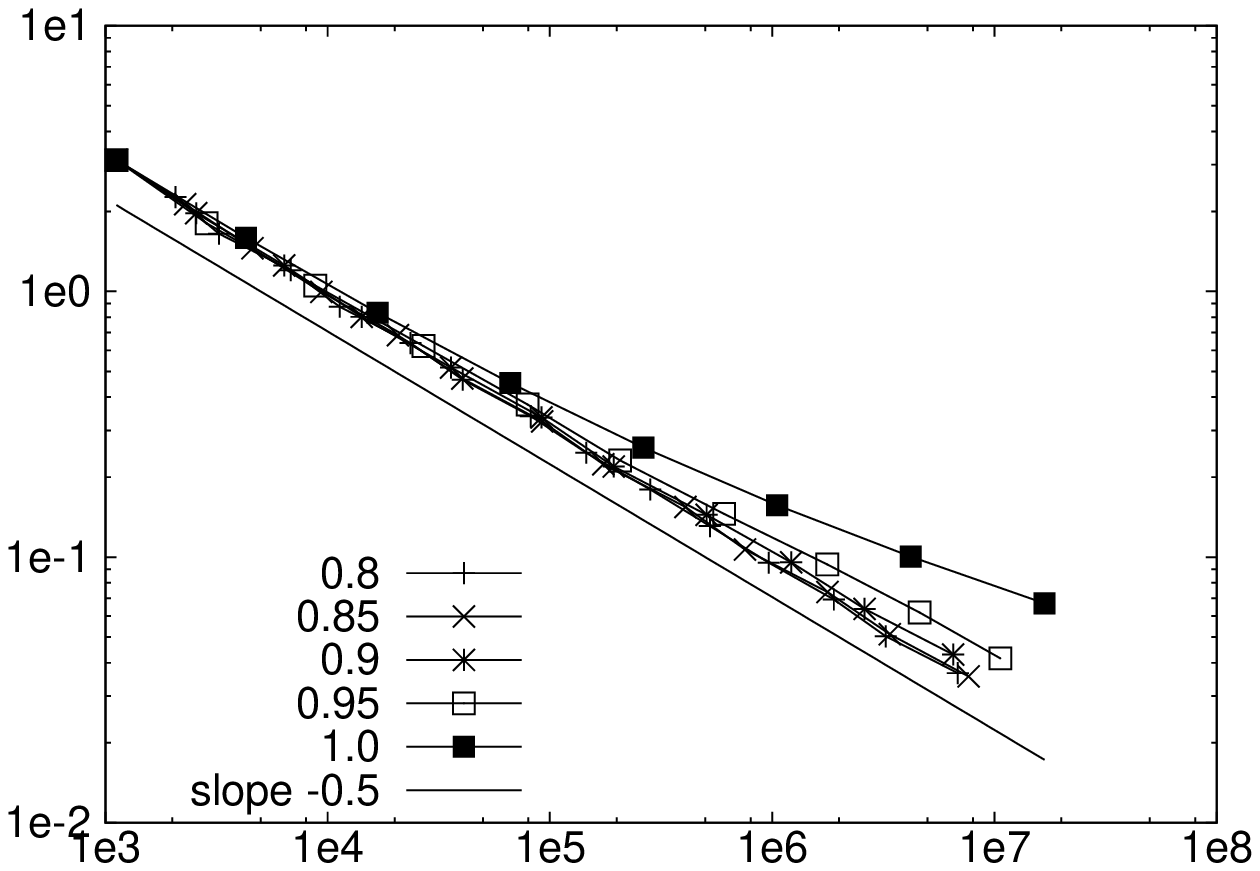} &
\includegraphics[width=0.3\textwidth]{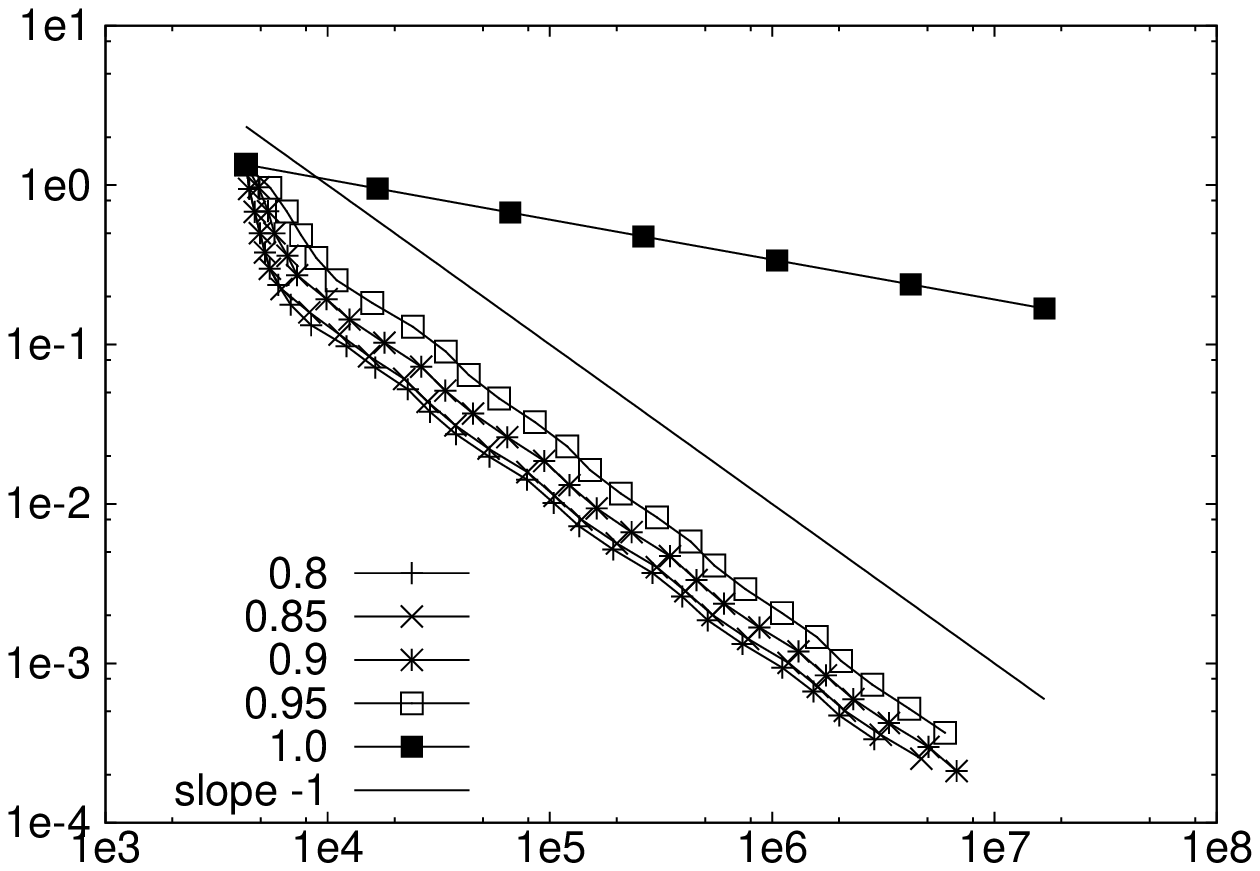} &
\includegraphics[width=0.3\textwidth]{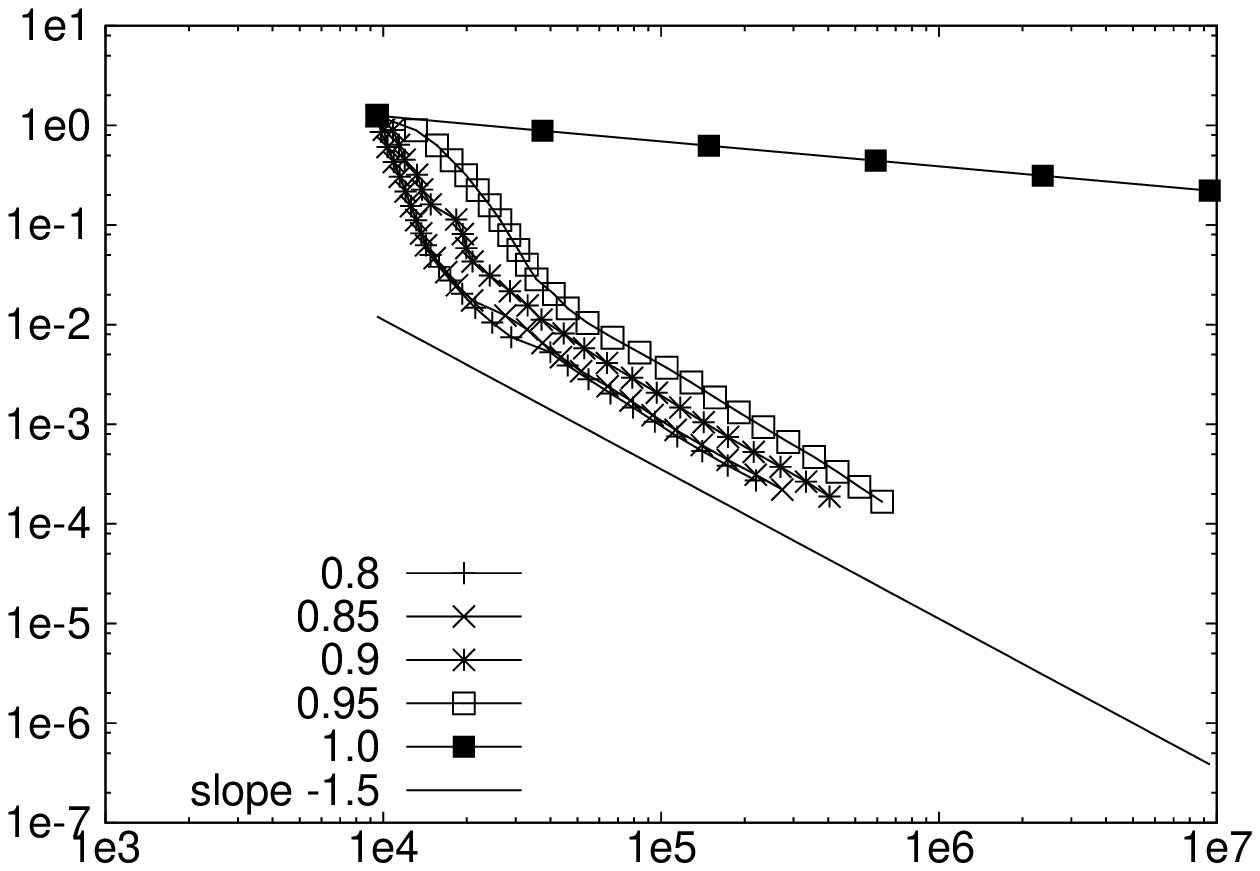} \\
\includegraphics[width=0.3\textwidth]{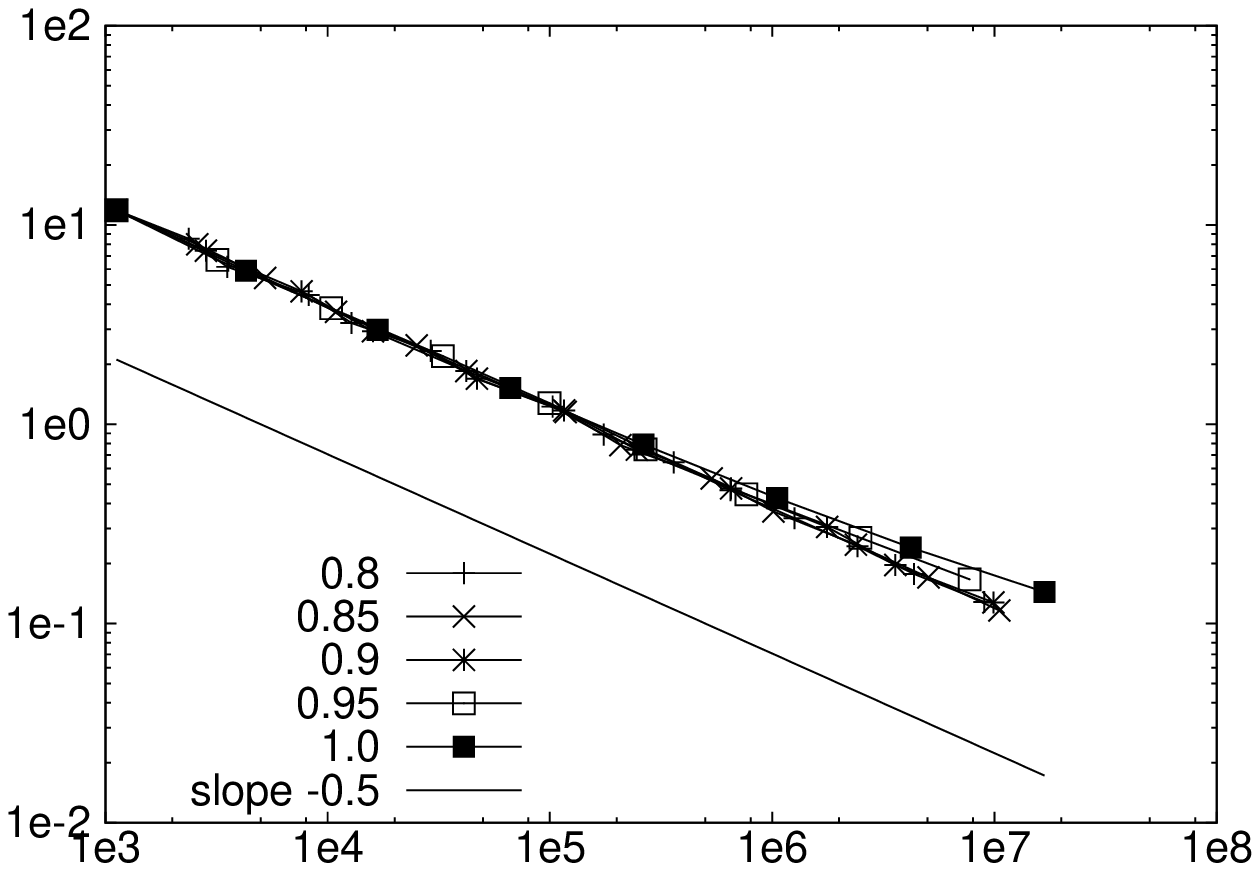} &
\includegraphics[width=0.3\textwidth]{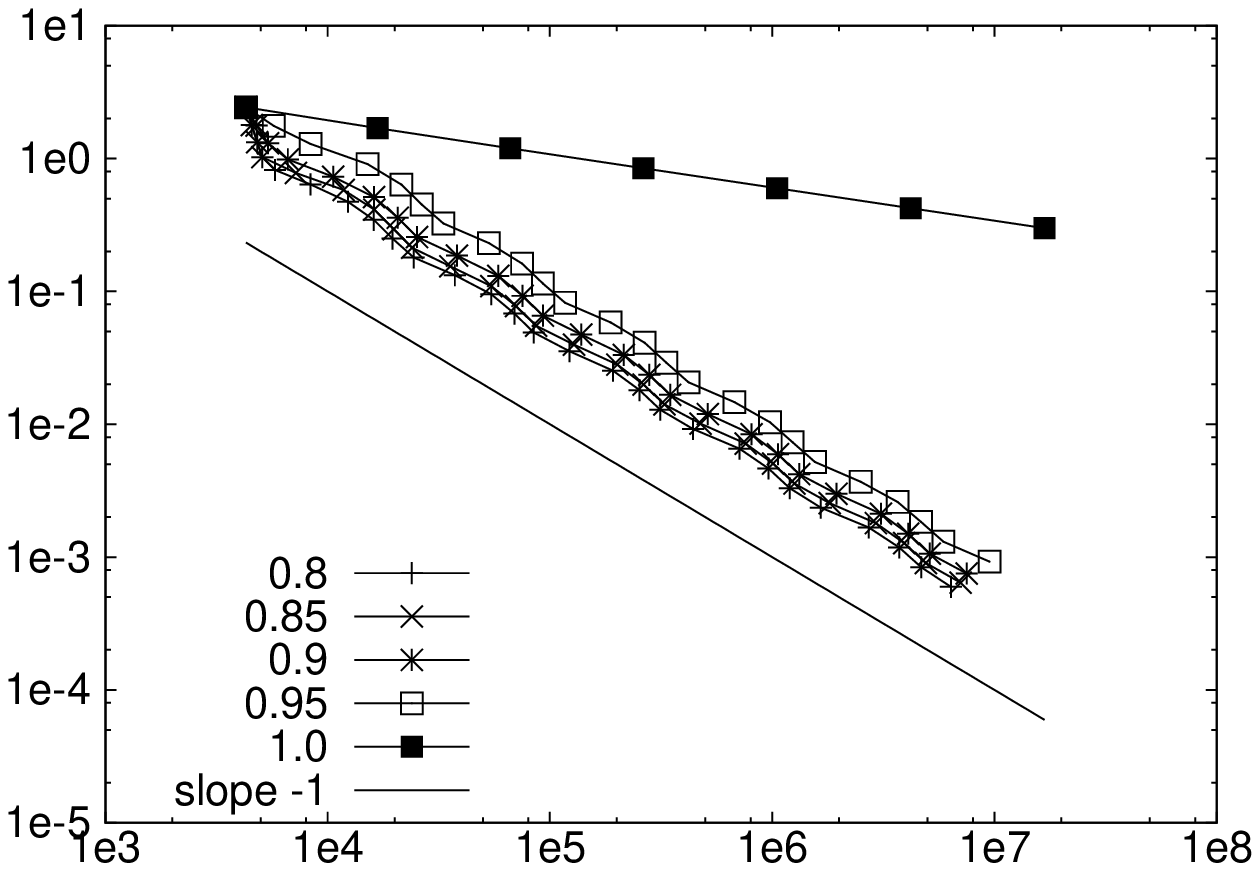} &
\includegraphics[width=0.3\textwidth]{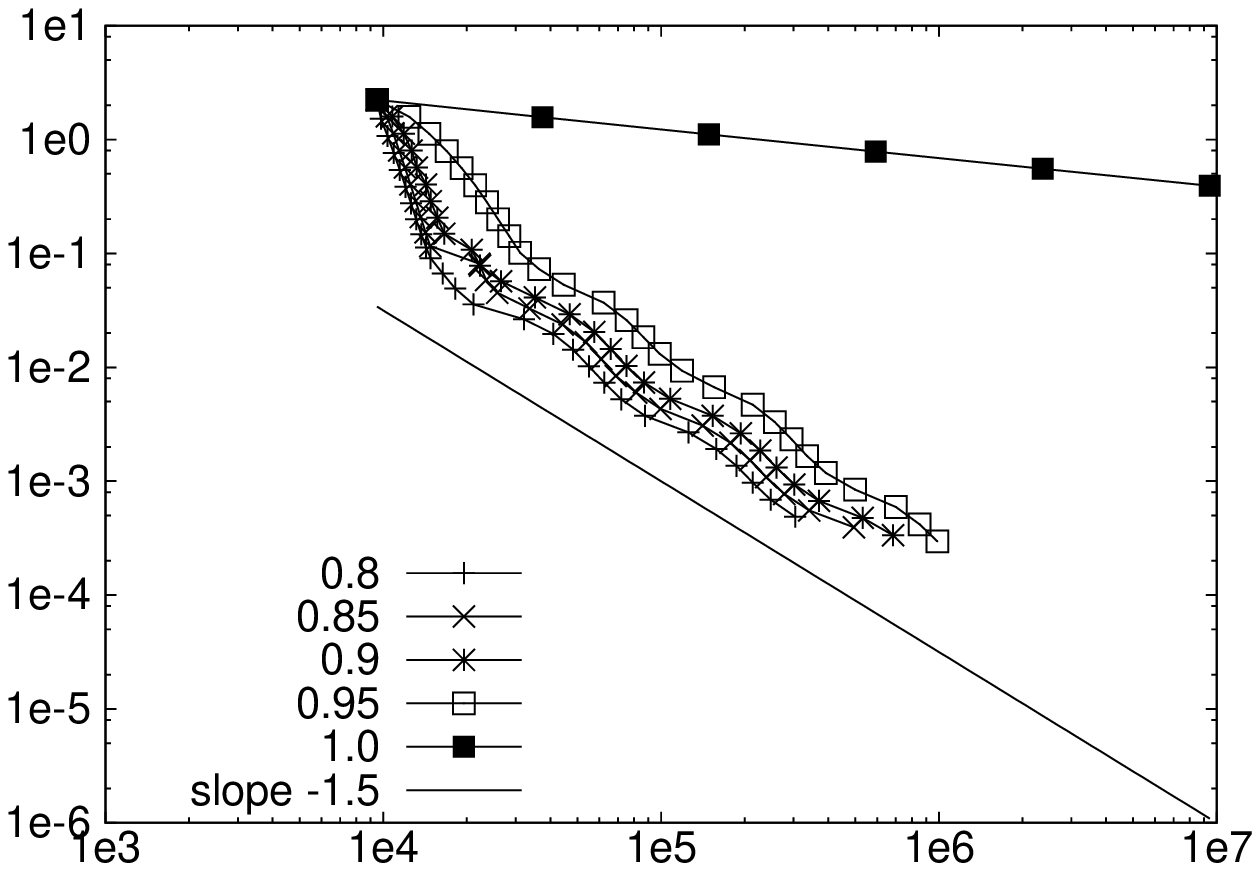} \\
\end{tabular}
\caption{Values of $\eta_l$ versus $\textrm{dim}(\mathbb V_\ell)$ during the adaptive process for marking parameters $\theta=0.8, 0.85, 0.9,0.95,1$, with $n=0$, $N=4$ (top row) and  $n=0$, $N=12$ (bottom row) in each cases using continuous piecewise polynomials of degree $r$ (column $r$). 
The optimal decay rate of $-r/2$ is observed as soon as $\theta<1$ when $r=2,3$ and $\theta <0.95$ when $r=1$.}\label{f:estimators}
\end{figure}

\bibliographystyle{siam}

\end{document}